\documentclass[12pt]{article}
\usepackage{amssymb,amsmath,latexsym}
\usepackage{amsmath,amsthm,amscd,amsfonts,amssymb,enumerate}
\usepackage{graphicx}
\usepackage{tikz}
\usepackage{float}
\usepackage{tkz-berge}
\usepackage{caption}
\usepackage[position=top]{subfig}

\usepackage{hyperref}
\usepackage{authblk}
\setlength{\oddsidemargin}{-0.25in} 
\setlength{\textwidth}{7in}   
\setlength{\topmargin}{-.75in}  
\setlength{\textheight}{9.2in}  

\newtheorem{theorem}{Theorem}

\newtheorem{question}{Question}

\newtheorem{lemma}{Lemma}

\setlength{\parindent}{0in}
\begin{document}
\title{Automorphism groups of $2$-groups of coclass at most $3$}
\author[1,2]{Alireza Abdollahi}
\affil[1]{Department of Mathematics, University of Isfahan, Isfahan, 81746-75441, Iran}
\affil[2]{School of Mathematics, Institute for Research in
	Fundamental Sciences (IPM), Tehran, Iran}
\author[1]{Nafiseh Rahmani}
\date{}
\maketitle
\abstract{It is proved that automorphism  groups of all $2$-groups of coclass $2$ are $2$-groups, except only three ones; for $2$-groups of coclass $3$, there are only $20$ groups and exactly six infinite sequences of $2$-groups whose automorphism groups are not $2$-groups.  }
\section{Introduction}
Many interesting questions in group theory are related to the order of automorphism groups. One of them which is considered in \cite{ma}   is the following.
\begin{question}\label{ques}
Is it true that for most $p$-groups the automorphism group is a $p$-group?
\end{question}
Where  by `most' it is meant to consider  all $p$-groups of order at most $p^n$ and let $n$ go to infinity.
The well-known examples of finite $2$-groups satisfying the property mentioned in  Question \ref{ques}  are  cyclic $2$-groups, dihedral $2$-groups $\mathbb{D}_{2^n}(n \geq 3)$ of order $2^n$ and the generalized quaternion groups $\mathbb{Q}_{2^n}(n \geq 4)$ of order $2^n$.
Also three infinite families of $2$-groups were introduced in \cite{cy}.\\
In \cite{H} and \cite{m}, it was proved that the automorphism group of a finite $p$-group is almost always a $p$-group,
 where interpretation of  `almost always'  depends on three parameters: the lower $p$-length, the number of generators, and $p$; while two of them are fixed, the third goes to infinity; however the latter interesting result does not answer  Question \ref{ques} and it still  remains open.
 
 The coclass of a group of order $p^n$ ($p$ prime) and  nilpotency class $c$ is defined to be $n-c$. 
 
 In this paper we try to determine all $2$-groups of coclass at most $3$ whose automorphism groups are $2$-groups.

Let us say some words on the classification of $p$-groups by coclass.  Leedham-Green
and Newman suggested to investigate $p$-groups by coclass \cite{ln}. The latter suggestion  began an important research
project and made a new insight into the theory of $p$-groups. See the book of Leedham-Green and McKay
\cite{lm} for more information.

An essential tool to study  $p$-groups of coclass $r$  is the directed graph $\mathcal{G}(p,r)$: its vertices correspond
to the isomorphism types of  finite $p$-groups of coclass $r$ and there is a directed edge from  $G$ to $H$ if $G\cong H/N$, where $N$ is a normal subgroup of order $p$ of $H$.
 Let $G$ and $H$ be two distinct vertices of $\mathcal{G}(p,r)$. We say that $H$ is a descendant of $G$ if
there exists a path from $G$ to $H$.

 Every infinite pro-$p$-group $S$ of coclass $r$ defines an infinite path in $\mathcal{G}(p,r)$ by taking the sequence of groups $S_u:=S/\gamma_u(S), S_{u+1}:=S/\gamma_{u+1}(S),\dots$, where $u$ is chosen large enough so that $S_u$ is a finite $p$-group of  coclass $r$ and $S_u$ is not isomorphic to a quotient of any other infinite pro-$p$-group $\overline{S}$ of coclass $r$ with $\overline{S}\ncong S$. Then the full subtree $\mathcal{T}(S)$ of $\mathcal{G}(p,r)$ consisting of all descendants of the root $S_u$
is called a coclass tree in $\mathcal{G}(p,r)$. The infinite path $S_u, S_{u+1}, S_{u+2},\dots$ contains every infinite path of $\mathcal{T}(S)$. The infinite path starting from $S_u$ is called the main line of $\mathcal{T}(S)$.  The pro-$p$-group $S$ can be reconstructed from the  coclass tree $\mathcal{T}(S)$ as the inverse
limit of the groups on the main line of $\mathcal{T}(S)$. Hence the main lines of $\mathcal{G}(p,r)$
correspond 1-1 to the isomorphism types of infinite pro-$p$-groups of coclass $r$. By Theorem $D$ of the coclass
theorems \cite{lm}, there are only finitely many isomorphism types of infinite pro-$p$-groups of coclass $r$.
Thus  $\mathcal{G}(p,r)$ consists of finitely many different  coclass trees.

 For $i\geq u$, the subgraph $\mathcal{B}_i$ of $\mathcal{T}(S)$, consisting of all descendants of $S_i$ which are not descendants
of $S_{i+1}$ is called a branch of  $\mathcal{T}(S)$.
We say that a vertex (a group) of $\mathcal{B}_i$ is capable if it has immediate descendants.\\
For $k\in\mathbb{N}$, we define shaved coclass tree $\mathcal{T}_k(S)$ as the full subgraph of $\mathcal{T}(S)$ consisting of those vertices (groups) which have distance at most $k$ from a vertex (a group) on the main line and we consider $\mathcal{B}_{k,i}$ as the $i$-th branch of $\mathcal{T}_k(S)$.
 
 In \cite{cl}, pro-$2$-groups of  coclass at most $3$ were determined and it was used to classify  $2$-groups of coclass at most $3$ into families (or coclass trees). Based upon these observations, the existence of preiodic patterns in graphs $\mathcal{G}(2,r)$ was conjectured.
   In  \cite{el}, it is proved  that $\mathcal{G}(2,r)$ could be described by  a finite number of  subgraphs and periodic patterns.

 The infinite pro-$2$-group $S$ corresponding to the tree $\mathcal{T}(S)$ is an extension of an infinite group $T$
(called translation subgroup) by a finite $2$-group $P$ (called point
group).  Furthermore, $T\cong \mathbb{Z}_2^d$, where $\mathbb{Z}_2$ is the group of  $2$-adic integers \cite[Theorem 7.4.12]{lm}. The dimension $d$ of $T$ is also called the rank of $\mathcal{T}(S)$. The following is the main result of \cite{el} that was summarised  in \cite[Theorem 1]{E}.
\begin{theorem}\label{1}
Let $\mathcal{T}(S)$ be a coclass tree in $\mathcal{G}(2,r)$ of rank $d$. Then $\mathcal{T}(S)$ is virtually periodic with periodicity $d$; that is, there exists $f \in \mathbb{N}$ such that for every $i \geq f$ there is a graph isomorphism
$$\pi_i:\mathcal{B}_i\rightarrow \mathcal{B}_{i+d}: G \mapsto G^{\pi_i}$$
and $|G^{\pi_i}| = |G|2^d$ for all $G \in \mathcal{B}_i$.
\end{theorem}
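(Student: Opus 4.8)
This is the Eick--Leedham-Green periodicity theorem for $p=2$, so the natural plan is to reduce the branch structure to linear data over the translation subgroup and then read off the period from multiplication by $2$ on $\mathbb{Z}_2^{d}$. First I would fix the infinite pro-$2$-group $S$ attached to $\mathcal{T}(S)$ together with its translation subgroup $T\cong\mathbb{Z}_2^{d}$ and point group $P=S/T$, viewing $T$ as a $\mathbb{Z}_2P$-module under conjugation. Put $T_i:=T\cap\gamma_i(S)$. The coclass theorems give, for all sufficiently large $i$, that $\gamma_i(S)/\gamma_{i+1}(S)$ has order $2$ (so $|S_{i+1}|=2|S_i|$ and hence $|S_{i+d}|=2^{d}|S_i|$), that the induced map $S_i\to P$ with kernel $T^{(i)}:=(T_i\gamma_i(S))/\gamma_i(S)$ is onto and stabilises, and that the $T_i$ form a uniserial chain of $\mathbb{Z}_2P$-submodules of $T$. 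Since $T$ is $\mathbb{Z}_2$-free of rank $d$ we have $[T:2T]=2^{d}$, and a stabilisation argument for uniserial chains of submodules of a finitely generated $\mathbb{Z}_2P$-module shows $T_{i+d}=2T_i$ for $i$ large; thus multiplication by $2$ on $T$ realises the $d$-step shift along the main line.

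The second step is to replace each branch $\mathcal{B}_i$ by a finite combinatorial ``data graph''. Here I would invoke the structural description of descendants: for $i$ past a bound depending only on $r$, every $G\in\mathcal{B}_i$ sits over a group on the main line and is determined up to isomorphism by that main-line group together with a bounded packet of data, namely a chain of $\mathbb{Z}_2P$-submodules of $T/T_i$ refining the uniseries together with a tuple of classes in second cohomology groups built from $P$ and these modules, all of it living inside $T/2^{m}T$ for some bounded $m$. Immediate-descendant edges correspond to the natural boundary maps on these cohomology groups. This identification $\mathcal{B}_i\cong\Delta_i$ of directed graphs is the core of the Eick--Leedham-Green machinery, and I would quote it rather than reprove it.

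Granting the identification, the isomorphism $\pi_i$ is essentially forced. Multiplication by $2$ carries $T_i$ to $T_{i+d}$, hence carries the submodule-chains-and-cohomology packets defining $\Delta_i$ bijectively to those defining $\Delta_{i+d}$; because the cohomology groups are functorially attached to the modules, this bijection commutes with the boundary maps, so it is an isomorphism $\Delta_i\to\Delta_{i+d}$ of directed graphs. Transporting it through $\mathcal{B}_i\cong\Delta_i$ and $\mathcal{B}_{i+d}\cong\Delta_{i+d}$ produces $\pi_i$. Finally, doubling a submodule multiplies the corresponding index in $T$ by $2^{d}$ while leaving the bounded cohomological data and the ``depth below the main line'' unchanged, so if $|G|=2^{j}|S_i|$ then $|G^{\pi_i}|=2^{j}|S_{i+d}|=2^{d}|G|$. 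Setting $f$ equal to the maximum of the finitely many thresholds arising above (finitely many because $\mathcal{G}(2,r)$ decomposes into finitely many coclass trees) gives the theorem.

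The main obstacle is the ``eventual stabilisation'' inside the second step: one must produce a single $f$ beyond which the branches are genuinely shifts of each other. This needs a uniform handle on how descendants accumulate so that only bounded cohomological data can occur, and it needs the module chain $(T_i)$, the point-group quotients $S_i/T^{(i)}$, and the extension data to all settle simultaneously. Both requirements lean on the coclass theorems --- the finiteness of infinite pro-$2$-groups of coclass $r$ and the near-uniserial structure of finite groups of fixed coclass --- combined with a Noetherian-plus-periodicity argument on the finitely generated $\mathbb{Z}_2P$-module $T$. (An alternative that yields only the numerical periodicity, i.e.\ that the number of groups in $\mathcal{B}_i$ of each order is eventually periodic in $i$, is du Sautoy's rationality theorem for the relevant $p$-adic integrals; obtaining the actual graph isomorphism, however, seems to require the explicit construction sketched above.)
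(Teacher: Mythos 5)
You should first note that the paper does not prove this statement at all: Theorem \ref{1} is imported verbatim as the main result of \cite{el}, in the summarised form of \cite[Theorem 1]{E}, so there is no internal proof to compare against. Judged as a stand-alone proof, your outline reproduces the general strategy of Eick--Leedham-Green (translation subgroup $T\cong\mathbb{Z}_2^d$, uniserial action, $T_{i+d}=2T_i$ for large $i$, multiplication by $2$ as the $d$-step shift), but it has a genuine gap at its centre: the ``second step'', in which each branch $\mathcal{B}_i$ is identified with a finite graph $\Delta_i$ of bounded submodule-and-cohomology data in a way compatible with multiplication by $2$, is not an auxiliary fact you can quote and then build on --- it is essentially the content of the theorem. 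Once such an identification with \emph{bounded} data commuting with the shift is granted, the conclusion is immediate, so the proposal as written defers exactly the part that needs proving (the simultaneous stabilisation of the point-group quotients, the module chain, and the relevant second cohomology, with a single uniform threshold $f$).

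A second, related omission is that nothing in your sketch uses $p=2$ specifically, yet the statement is special to $p=2$: for odd primes the theorem of \cite{el} gives periodicity only of the shaved trees $\mathcal{T}_k(S)$, because branches of $\mathcal{G}(p,r)$ need not have bounded depth. The version for full branches $\mathcal{B}_i$ asserted here requires the fact that branch depth in $\mathcal{G}(2,r)$ is bounded (see \cite{lm} and \cite{el}); your ``main obstacle'' paragraph gestures at needing ``only bounded cohomological data'' but does not identify this as the $p=2$ input nor indicate how it would be established. So the proposal is a reasonable roadmap of the literature proof, but it is not a proof: the boundedness of the data and the branch-to-data-graph correspondence must either be proved or honestly cited as the theorem of \cite{el} itself, which is what the paper in fact does.
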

 By Theorem \ref{1}, also for each integer $k$, the subtree $\mathcal{T}_k(S)$ is virtually periodic. It allows to define infinite sequences of  finite $2$-groups of coclass $r$ in $\mathcal{T}_k(S)$. For suitable $m \geq f$,  every group (vertex)  $G$ of $\mathcal{B}_{k,i}$ with $m \leq i \leq m + d$, defines an associated infinite  sequence $( G_0, G_1, G_2, \dots)$  which contains $G$ and all iterated preimages of $G$ under the maps $\pi_{i+j}$ for $j \in\mathbb{N}$. So the subtree $\mathcal{T}_k(S)$  divides into finitely many infinite  sequences and finitely many groups lying outside these sequences.
It was proved in \cite{el} that the groups in an infinite  sequences could be described by a single parametrised presentation and it could be constructed by a finite calculation. In \cite{bd}   a detailed description for constructions of the infinite  sequences was offered  and (as  an example)   the presentations of the infinite  sequences for the graphs  $\mathcal{G}(2,1)$, $\mathcal{G}(3,1)$ and $\mathcal{G}(2,2)$ were determined. Note that the presentations of infinite  sequences of $2$-groups of coclass $3$ have not yet known.

A significant application  of coclass theory  has been achieved by Eick \cite{E}.
 She considered infinite  sequences that were defined according to periodic property of  coclass trees to show the following. For a group $G$, $Aut(G)$ denotes the automorphism group of $G$.
\begin{theorem}[{\cite[Theorem 2]{E}}]\label{e}
Let  $(G_j  \mid j\in \mathbb{N})$ be an infinite  sequence of $2$-groups of coclass $r$. Then there exist natural numbers $m$ and $f$ such that $|Aut(G_{j+1})| = 2^m|Aut(G_j)|$ for every $j \geq f$.
\end{theorem}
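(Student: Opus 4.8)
The plan is to realise all the groups $G_j$ of the sequence simultaneously as quotients of a single finite-rank pro-$2$-group, and then to extract the growth of $\lvert Aut(G_j)\rvert$ from the open-subgroup structure of a compact $2$-adic analytic group. Using the description of infinite sequences recalled above --- the existence of a single parametrised presentation \cite{el,bd}, together with the virtual periodicity of Theorem~\ref{1} --- one first sets $\mathfrak{G}:=\varprojlim_j G_j$. By the coclass theorems \cite{lm}, $\mathfrak{G}$ is a finitely generated pro-$2$-group which is $(\mathbb{Z}_2^{\,d})$-by-finite, where $d$ is the rank of the ambient coclass tree; write $1\to T\to\mathfrak{G}\to P\to 1$ with $T\cong\mathbb{Z}_2^{\,d}$ and $P$ a fixed finite $2$-group. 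There is then a descending chain $(N_j)_{j\ge f_0}$ of open normal subgroups of $\mathfrak{G}$ with $\bigcap_j N_j=1$, with $G_j\cong\mathfrak{G}/N_j$, with $N_j\le T$ for large $j$, and --- crucially, by the periodicity of the tree --- with $[N_j:N_{j+1}]$ a fixed power of $2$ and $N_j/N_{j+1}$ of fixed isomorphism type once $j$ is large. Write $T_j$ for the (characteristic) image of $T$ in $G_j$, so that $G_j/T_j$ has the fixed isomorphism type of $P$.

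Since $\mathfrak{G}$ is a compact $2$-adic analytic group, so is $Aut(\mathfrak{G})$ (every automorphism of a finitely generated pro-$2$-group is continuous), and hence so is the closed subgroup $\mathcal{A}:=\{\alpha\in Aut(\mathfrak{G}):\alpha(N_j)=N_j\text{ for all }j\ge f_0\}$. Restriction gives continuous homomorphisms $\rho_j:\mathcal{A}\to Aut(G_j)$ whose kernels $\mathcal{A}_j:=\{\alpha\in\mathcal{A}:\alpha\equiv\mathrm{id}\bmod N_j\}$ form a descending chain of \emph{open} normal subgroups of $\mathcal{A}$ with trivial intersection, so that $\lvert\mathrm{Im}\,\rho_j\rvert=[\mathcal{A}:\mathcal{A}_j]<\infty$. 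The crux --- and the step I expect to be the main obstacle --- is to prove that $\rho_j$ is \emph{surjective} for all large $j$, i.e. that every automorphism of $G_j$ lifts to a compatible automorphism of $\mathfrak{G}$. The idea: an automorphism of $G_j$ permutes the terms of the lower central series, hence fixes each of the characteristic subgroups appearing in the chain, and in particular $T_j$; it therefore reduces to an automorphism of $P$, a semilinear automorphism of $T_j$, and a $1$-cocycle, and the real task is to lift these compatibly up to $G_{j+1},G_{j+2},\dots$, for which one must show that the obstruction to each such lift --- an element of a cohomology group that eventually stabilises --- vanishes once $j$ is large, because the relations of the parametrised presentation no longer change. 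If full surjectivity is not available, the fallback is to prove that $\lvert\operatorname{coker}\rho_j\rvert$ is eventually independent of $j$, by comparing the standard extension $1\to Z^{1}(P,T_j)\to Aut(G_j)\to \operatorname{Comp}(P,T_j)\to 1$ (valid because $T_j$ is characteristic, with $\operatorname{Comp}(P,T_j)$ the group of compatible pairs in $Aut(T_j)\times Aut(P)$) with its analogue over $\mathbb{Z}_2$, and noting that each factor --- $\lvert Z^1(P,T_j)\rvert=\lvert T_j\rvert\,\lvert H^1(P,T_j)\rvert/\lvert T_j^{P}\rvert$ and $\lvert\operatorname{Comp}(P,T_j)\rvert$ --- either has eventually constant order or changes by a fixed power of $2$, since $T$ is one fixed $\mathbb{Z}_2[P]$-lattice and the $T_j$ are its congruence quotients.

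Granting that $\rho_j$ is onto with kernel $\mathcal{A}_j$ for $j\ge f$, we obtain $Aut(G_j)\cong\mathcal{A}/\mathcal{A}_j$ and hence $\lvert Aut(G_{j+1})\rvert/\lvert Aut(G_j)\rvert=[\mathcal{A}_j:\mathcal{A}_{j+1}]$. This index is a power of $2$: an element of $\mathcal{A}_j$ is trivial modulo $N_j$, so $\mathcal{A}_j/\mathcal{A}_{j+1}$ embeds, via $\alpha\mapsto\bigl(x\mapsto x^{-1}\alpha(x)\bmod N_{j+1}\bigr)$, into the group of $1$-cocycles from the finite $2$-group $G_j$ to the finite $2$-group $N_j/N_{j+1}$, which is itself a finite $2$-group. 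Finally, these indices are eventually constant: $(\mathcal{A}_j)$ is a congruence-type filtration of the $2$-adic analytic group $\mathcal{A}$, and for large $j$ the passage from $\mathcal{A}_j$ to $\mathcal{A}_{j+1}$ is governed by the action of $\mathcal{A}$ on the section $N_j/N_{j+1}$, whose isomorphism type has stabilised; concretely, passing to an open uniform pro-$2$ subgroup $\mathcal{U}$ of $\mathcal{A}$ one finds that the filtration $(\mathcal{U}\cap\mathcal{A}_j)$ eventually has constant successive indices, and the finitely many cosets outside $\mathcal{U}$ contribute nothing further once $j$ is large. Taking $m$ to be this eventual common value of $\log_2[\mathcal{A}_j:\mathcal{A}_{j+1}]$ and $f$ the threshold beyond which all of the above holds yields the theorem.
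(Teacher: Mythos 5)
First, note that Theorem~\ref{e} is not proved in this paper at all: it is quoted verbatim from Eick \cite{E} (Theorem 2 there), so the only meaningful comparison is with Eick's argument. Measured against that, your proposal has a decisive gap at its very first step. The construction $\mathfrak{G}:=\varprojlim_j G_j$ presupposes that each $G_j$ is a quotient of $G_{j+1}$, but the groups of an infinite coclass sequence are related by the \emph{graph} isomorphisms $\pi_i$ of Theorem~\ref{1}, which are not group epimorphisms, and in general no compatible epimorphisms $G_{j+1}\to G_j$ exist. The simplest counterexample is the quaternion sequence $(\mathbb{Q}_{2^n})$ in $\mathcal{G}(2,1)$: the unique normal subgroup of order $2$ of $\mathbb{Q}_{2^{n+1}}$ is its centre and the quotient is $\mathbb{D}_{2^n}$, so $\mathbb{Q}_{2^n}$ is not a quotient of $\mathbb{Q}_{2^{n+1}}$ at all. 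The same phenomenon occurs in the sequences produced in this very paper: in $K_n^2$, $K_n^3$, $K_n^5$, $K_n^6$ the defining parameters ($t^{2^n}$, $t^{1+2^n}$) move with $n$, so reducing $K_{n+1}^i$ modulo $\langle t^{2^{n+1}}\rangle$ does not return $K_n^i$. Consequently the whole first route (automorphisms of the inverse limit, the restriction maps $\rho_j$, uniform pro-$2$ subgroups of $\mathcal{A}$) has no object to start from for a general sequence.

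Even in the cases where compatible epimorphisms do exist (main-line groups, or $K_n^1$), the two steps that would carry the entire content of the theorem are left as hopes rather than proofs: the surjectivity of $\rho_j$ --- which you yourself flag as the main obstacle, and which is genuinely problematic since automorphisms of finite quotients of a pro-$2$-group need not lift --- and the eventual constancy of $[\mathcal{A}_j:\mathcal{A}_{j+1}]$. Your ``fallback'' is in fact much closer to Eick's actual proof, which works inside each finite group $G_j$ using the eventually characteristic translation subgroup $T_j$, the fixed point group $P$, and cohomology of the single $\mathbb{Z}_2P$-lattice $T$ whose congruence quotients the $T_j$ are. But as written it is both inaccurate and circular: the sequence $1\to Z^1(P,T_j)\to Aut(G_j)\to \mathrm{Comp}(P,T_j)\to 1$ is not exact on the right, since only the compatible pairs stabilizing the extension class $[\delta_j]\in H^2(P,T_j)$ are induced by automorphisms of $G_j$, and showing that the order of this stabilizer, together with $|H^1(P,T_j)|$ and the fixed-point terms, eventually changes by one fixed power of $2$ is precisely the substance of \cite{E}; asserting that ``each factor either has eventually constant order or changes by a fixed power of $2$'' assumes exactly what is to be proved.
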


To determine all $2$-groups of a fixed coclass  having $2$-power order  automorphism groups, in   Eick's approach,  one needs to consider  the orders of automorphism groups of all groups in periodic parts of  coclass trees. In this paper we use an observation (Lemma \ref{l}) to find these groups in a  straightforward way. It follows  that if the automorphism group of a $2$-group in the main line of a coclass tree is  $2$-group, then the number of $2$-groups of the coclass tree whose automorphism groups are not $2$-groups is finite.

Our main results are the following.

\begin{theorem}\label{c2}
	The only $2$-groups of coclass $2$ whose automorphism groups are not $2$-groups are as follows.
	\begin{enumerate}
		\item
		$C_2\times C_2\times C_2$,
		\item
		$\mathbb{Q}_8\times C_2$
		\item
		 $\langle a,b,c \mid a^4=b^2=c^2=1, b^a=b, c^a=c, b^c=a^2b\rangle$.
	\end{enumerate}
\end{theorem}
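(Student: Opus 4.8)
\emph{Proof plan.} The strategy is to cut the problem down to a finite check, using the periodicity results together with Lemma~\ref{l}. First recall that, by \cite{cl}, there are only finitely many infinite pro-$2$-groups of coclass $2$; hence, by the coclass theorems, the graph $\mathcal{G}(2,2)$ is the union of finitely many coclass trees $\mathcal{T}(S^{(1)}),\dots,\mathcal{T}(S^{(t)})$ together with finitely many vertices lying on none of them. For each tree I would take the parametrised presentation of its main line (these presentations are available from \cite{cl,bd}, or can be produced by the method of \cite{el,bd}) and, for one main-line group $S^{(i)}_{j}$ with $j$ past the periodicity bound $f$ of Theorem~\ref{1}, compute $\mathrm{Aut}\big(S^{(i)}_{j}\big)$ and verify that it is a $2$-group. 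Since a main line is itself one of the infinite sequences to which Theorem~\ref{e} applies, the relation $|\mathrm{Aut}(S^{(i)}_{j+1})| = 2^{m}|\mathrm{Aut}(S^{(i)}_{j})|$ valid for all large $j$ then propagates the property to every main-line group sufficiently far out.

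Next I would invoke the consequence of Lemma~\ref{l} recorded in the Introduction: once some main-line group of a coclass tree has $2$-group automorphism group, only finitely many groups of that tree can fail to. Combined with the finitely many off-tree vertices and the finitely many main-line groups below level $f$ in each tree, this yields an explicit bound $N$ such that every $2$-group of coclass $2$ whose automorphism group is not a $2$-group has order at most $N$.

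The proof then concludes with a finite search: run through all $2$-groups of coclass $2$ of order at most $N$ --- supplied by the classification in \cite{cl}, or equivalently by the SmallGroups library in the relevant range --- and compute the orders of their automorphism groups. In fact the three groups in the statement already appear among the groups of order $8$ and $16$: $C_2^3$ has $\mathrm{Aut}(C_2^3)\cong\mathrm{GL}(3,2)$ of order $168$, while $\mathbb{Q}_8\times C_2$ and $\langle a,b,c \mid a^4=b^2=c^2=1,\ b^a=b,\ c^a=c,\ b^c=a^2b\rangle$ are groups of order $16$ whose automorphism groups have order divisible by $3$; the search confirms that no other $2$-group of coclass $2$ has automorphism group with nontrivial odd part.

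The main obstacle is the computation inside the first step: determining the periodicity bound $f$ for each coclass-$2$ tree and actually evaluating the automorphism group of a deep enough main-line group from its parametrised presentation. This is the delicate point, because a single tree whose main line retained a nontrivial odd part in its automorphism orders would create an infinite family of exceptions and defeat the theorem; everything after that step is a bounded, routine enumeration. A secondary care is needed to make the bound $N$ coming from Lemma~\ref{l} effective, so that the final search is genuinely exhaustive.
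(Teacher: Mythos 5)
Your proposal is correct in substance and rests on the same key tool as the paper --- Lemma~\ref{l} to reduce the problem to a finite computation, followed by a machine check of the small groups --- but your reduction step is implemented quite differently, and more heavily, than the paper's. The paper does not touch the periodicity machinery at all for coclass $2$: it checks with GAP that \emph{every} $2$-group of coclass $2$ and order $2^5$ has a $2$-group of automorphisms, notes that by the classification in \cite{cl} every main line of $\mathcal{G}(2,2)$ contains a group of order $2^5$, and then applies Lemma~\ref{l} to conclude that all coclass-$2$ groups of order at least $2^5$ are covered; the exceptional groups therefore have order at most $2^4$, and a GAP pass over orders $8$ and $16$ yields exactly the three listed groups. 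Your route --- taking parametrised main-line presentations, going past the periodicity bound $f$ of Theorem~\ref{1}, computing $\mathrm{Aut}$ of a deep main-line group and propagating with Theorem~\ref{e} --- would also work, but several of its ingredients are unnecessary: Lemma~\ref{l} alone already propagates the property to \emph{all} descendants of any main-line group (including all later main-line groups), so Theorem~\ref{e} is redundant and no periodicity bound is needed; any main-line group with $2$-group automorphism group suffices for the finiteness conclusion. Moreover, the "secondary care" you flag --- making the bound $N$ effective so the final search is exhaustive --- is precisely the point that your route leaves delicate (it requires explicit control of branch depth below the chosen level), whereas the paper's choice to certify at the single order $2^5$ makes the bound immediate ($N=2^4$) and replaces the computation of automorphism groups of deep parametrised groups by a routine check of the order-$32$ groups in the SmallGroups library. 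In short: same skeleton, but the paper's observation that each main line is already met at order $2^5$ buys a much shorter and fully effective argument, while your version imports the Eick-style periodic-part analysis that the paper was explicitly designed to avoid.
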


\begin{theorem}\label{5}
	In the graph $\mathcal{G}(2,3)$, among  all coclass trees, except one coclass tree (Family 81 of \cite{cl}),  there are finitely many  groups whose  automorphism groups  are not $2$-groups.
\end{theorem}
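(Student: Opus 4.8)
The plan is to reduce Theorem \ref{5} to a finite computation, by combining the coarse structure of $\mathcal{G}(2,3)$ with the finiteness consequence of Lemma \ref{l}. By Theorem $D$ of the coclass theorems \cite{lm} there are only finitely many infinite pro-$2$-groups of coclass $3$, so $\mathcal{G}(2,3)$ is the union of finitely many coclass trees together with finitely many further vertices, and these trees are exactly the families recorded in \cite{cl}. Since the assertion involves only the coclass trees (and there are only finitely many other vertices in any case), it suffices to prove: for every family $\mathcal{T}(S)$ of $\mathcal{G}(2,3)$ other than Family $81$, all but finitely many of its groups have automorphism group a $2$-group.

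I would establish this one family at a time. Fix $\mathcal{T}(S)$ with main line $S_u, S_{u+1}, S_{u+2},\dots$, where $S$ is not Family $81$, and recall that $S_i = S/\gamma_i(S)$. Since the infinite pro-$2$-groups of coclass $\le 3$ are explicitly known from \cite{cl}, each main-line group $S_i$ can be written down and its automorphism group computed. Applying Theorem \ref{e} to the subsequences $S_j, S_{j+d}, S_{j+2d},\dots$ that make up the main line --- here $d$ is the rank of $\mathcal{T}(S)$, available from \cite{el} --- the value $|Aut(S_i)|$ satisfies $|Aut(S_{i+d})| = 2^{m}|Aut(S_i)|$ for all $i$ past some threshold, so the property that $Aut(S_i)$ is a $2$-group is eventually constant along the main line and is decided by inspecting only finitely many $i$. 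The real content of this step is the case-by-case verification that this eventual value is affirmative for every family except Family $81$; equivalently, that each such family has some (hence cofinitely many) main-line group whose automorphism group is a $2$-group.

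Once a main-line group $M = S_i$ of $\mathcal{T}(S)$ with $Aut(M)$ a $2$-group has been produced, the finiteness statement derived from Lemma \ref{l} at once yields that $\mathcal{T}(S)$ contains only finitely many groups whose automorphism groups are not $2$-groups. Forming the union over the finitely many families other than Family $81$ --- and, if one wants the statement for the whole graph, adjoining the finitely many vertices of $\mathcal{G}(2,3)$ lying in no coclass tree --- finishes the proof.

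The main obstacle is the verification in the second step. Coclass $3$ produces a large number of families; the main-line group witnessing the $2$-group property may sit fairly high up the tree, since small main-line groups can fail it --- for instance those for which the image of the automorphism group in $GL(G/\Phi(G))$ contains an element of odd order --- before the behaviour stabilises; and for each family one must fix the periodicity threshold of Theorem \ref{1} (from \cite{el}) to be sure that a finite check genuinely settles the eventual behaviour. In practice this amounts to a sizeable computer-algebra computation built on the pro-$2$-group data of \cite{cl}, and the distinguished role of Family $81$ is exactly that it is the one family for which this verification fails, its main-line automorphism groups never being $2$-groups.
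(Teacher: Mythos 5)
Your proposal follows essentially the same route as the paper: exhibit, for every family other than Family 81, a main-line group whose automorphism group is a $2$-group, and then let Lemma \ref{l} propagate this down the tree so that only finitely many exceptional groups can remain. The only difference is how the finite verification is organized --- the paper uses \cite[Theorem 4.4]{cl} to reduce it to a GAP/ANUPQ check of the coclass-$3$ groups of order $2^{15}$, whereas you invoke Theorem \ref{e} and periodicity thresholds (which is in fact redundant, since Lemma \ref{l} alone makes the $2$-group property persistent along each main line) --- but the mathematical content is the same computational reduction.
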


\begin{theorem}\label{s}
	There are exactly six infinite sequences of $2$-groups of coclass $3$ (in  Family $81$ of \cite{cl}) whose automorphism groups are not $2$-groups.
\end{theorem}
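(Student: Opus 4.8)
The plan is first to localize the statement to a single coclass tree and then to enumerate. By Theorem~\ref{5}, every coclass tree in $\mathcal{G}(2,3)$ other than Family~$81$ contains only finitely many groups whose automorphism group is not a $2$-group; by Theorem~\ref{e} the odd part of $|Aut(G_j)|$ along any infinite sequence is eventually constant, so a sequence lying entirely in one of those trees can have at most finitely many members with non-$2$-power automorphism group and hence is not one of the sequences we are after. Thus it suffices to analyse the coclass tree $\mathcal{T}(S)$ attached to Family~$81$, and the goal becomes: show that exactly six of the finitely many infinite sequences inside $\mathcal{T}(S)$ are ``bad'' (have $Aut(G)$ a non-$2$-group for infinitely many members $G$), the finitely many sporadic vertices of $\mathcal{T}(S)$ being absorbed into the count of Theorem~\ref{5}.

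Second, I would make that finite list of sequences explicit. Let $d$ be the rank of $\mathcal{T}(S)$ and fix $f$ as in Theorem~\ref{1}; after choosing a suitable shaving parameter $k$, the algorithm of~\cite{el} in the form described in~\cite{bd} produces parametrised presentations $\mathcal{P}_1,\dots,\mathcal{P}_N$ describing the infinite sequences $(G^{(t)}_j \mid j\in\mathbb{N})$, $1\le t\le N$, in $\mathcal{T}_k(S)$. Computing these presentations for a coclass-$3$ tree is the new ingredient here, since, as remarked above, they were not previously available; I expect this step to demand the most care (correct choice of $k$, correct periodicity maps $\pi_i$, and a correct identification of the main line inside Family~$81$).

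Third, I would reduce each sequence to a single finite check. By Theorem~\ref{e} there are $m,f'$ with $|Aut(G^{(t)}_{j+1})| = 2^{m}\,|Aut(G^{(t)}_j)|$ for all $j\ge f'$, so the odd part of $|Aut(G^{(t)}_j)|$ does not depend on $j$ once $j\ge f'$; hence the sequence $(G^{(t)}_j)$ is bad precisely when this stabilised odd part exceeds $1$, and this can be decided from any single member $G^{(t)}_j$ with $j\ge f'$. To decide which sequences are bad without computing $N$ automorphism groups blindly, I would use the observation in Lemma~\ref{l}: an odd-order automorphism of a member of a sequence forces an odd-order automorphism on the associated main-line group $S_i$, and conversely the odd-order automorphisms of the pro-$2$-group $S$ — which, in contrast to every other tree of $\mathcal{G}(2,3)$ by Theorem~\ref{5}, are nontrivial for Family~$81$ — pull back along the tree to the sequences passing near the main line. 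So I would (i) exhibit explicitly the nontrivial odd-order automorphisms of $S$ and of each $S_i$ (e.g.\ as an automorphism of the translation subgroup $T\cong\mathbb{Z}_2^{d}$ commuting with the action of the point group), showing that the main-line sequence and certain neighbouring sequences are bad, and (ii) for each of the remaining sequences compute $Aut(G^{(t)}_j)$ for one value $j\ge f'$ from the presentation $\mathcal{P}_t$ and record its odd part.

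The main obstacle is carrying out steps two and three \emph{uniformly and exactly}: producing the parametrised presentations for Family~$81$, and then controlling the odd part of $|Aut(G^{(t)}_j)|$ across the whole parametrised family rather than for a handful of sporadic orders, so that the answer is provably six. Concretely this needs a clean description of which characteristic quotient of $G^{(t)}_j$ carries the odd-order automorphisms, together with thresholds $k$ and $f'$ chosen so that no sporadic bad group is mistaken for part of a sequence and no sequence is misdiagnosed from a pre-periodic member. Once these are in place, checking that precisely six of the $\mathcal{P}_t$ have stabilised odd part larger than $1$, and writing down those six presentations, is a finite — if lengthy — computation.
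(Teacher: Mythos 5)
Your plan is essentially the paper's own proof: reduce to Family~$81$ via Theorem~\ref{5}, construct the parametrised presentations of the sequences in a shaved tree $\mathcal{T}_k(S)$ following \cite{el} and \cite{bd}, use Theorem~\ref{e} to decide each sequence from a single member, and verify the shaving parameter captures the whole tree (the paper checks $\mathcal{T}(S)=\mathcal{T}_1(S)$ by showing $\mathcal{T}_2(S)$ adds nothing), arriving at exactly six bad sequences out of ten. The only difference is your optional suggestion to pull odd-order automorphisms back from the main line via Lemma~\ref{l}, which the paper does not use; it instead performs the per-sequence computation directly.
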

 \section{ Proof of the main results}
In order to prove our results,  we need the following  lemma.
\begin{lemma}\label{l}
 Suppose that  $G$ and $H$ are two vertices  in the coclass graph $\mathcal{G}(2,r)$ such that there is a directed edge from $G$ to $H$. If $Aut(G)$ is a $2$-group, then so is $Aut(H)$.
\end{lemma}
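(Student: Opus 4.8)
The plan is to exploit the structure of the directed edge in $\mathcal{G}(2,r)$: the edge from $G$ to $H$ means $G \cong H/N$ for some normal subgroup $N$ of order $2$ in $H$. A well-known fact about coclass graphs is that such an $N$ must be central (indeed, $N$ lies in the center, since a normal subgroup of order $p$ in a $p$-group is always central). Moreover, because $H$ has coclass $r$ and $G = H/N$ also has coclass $r$, a dimension count shows the nilpotency class goes up by exactly one, so $N = \gamma_c(H)$ where $c$ is the class of $H$; in particular $N \leq Z(H)$ and $N$ is characteristic in $H$. I would first record this: $N$ is a characteristic central subgroup of $H$ of order $2$.

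**The homomorphism $\mathrm{Aut}(H) \to \mathrm{Aut}(G)$.** Since $N$ is characteristic in $H$, every automorphism of $H$ induces an automorphism of $G = H/N$, giving a homomorphism $\rho : \mathrm{Aut}(H) \to \mathrm{Aut}(G)$. The key step is to control the kernel $K = \ker \rho$, the group of automorphisms acting trivially on $H/N$. A standard stability/lifting argument shows that $K$ embeds into $\mathrm{Hom}(H/\Phi(H), N)$ — an automorphism trivial on $H/N$ sends each generator $x$ to $x \cdot \nu(x)$ for some $\nu(x) \in N$, and since $N$ has order $2$ this map factors through $H/H^2[H,H] = H/\Phi(H)$, an elementary abelian $2$-group. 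Hence $K$ is an elementary abelian $2$-group; in particular $K$ is a $2$-group. Therefore $|\mathrm{Aut}(H)| = |K| \cdot |\rho(\mathrm{Aut}(H))|$ divides $|K| \cdot |\mathrm{Aut}(G)|$, and if $\mathrm{Aut}(G)$ is a $2$-group then so is $\mathrm{Aut}(H)$.

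**Anticipated obstacle.** The main subtlety is justifying that $N$ is genuinely characteristic (not merely normal) in $H$, so that the restriction map $\mathrm{Aut}(H) \to \mathrm{Aut}(G)$ is well-defined on all of $\mathrm{Aut}(H)$. For $H$ of maximal class this is automatic since $\gamma_c(H)$ is the unique such subgroup, but for general coclass $r$ one must argue via the coclass constraint: if $\alpha \in \mathrm{Aut}(H)$ did not fix $N$, then $H/\alpha(N)$ would be another quotient of coclass $r$ but possibly not isomorphic to $G$ — one needs that the relevant term of the lower central series is the correct characteristic subgroup. I would handle this by noting $N \leq \gamma_c(H) \cap Z(H)$ and that coclass $r$ forces $\gamma_c(H)$ to have order $2$ (otherwise the coclass of $G = H/N$ would drop), so $N = \gamma_c(H)$ is characteristic. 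Once that point is secured, the remainder is the routine stability-group computation sketched above, and the divisibility conclusion $|\mathrm{Aut}(H)| \mid 2^t |\mathrm{Aut}(G)|$ is immediate.
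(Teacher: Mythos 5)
Your proof is correct, and it rests on the same key observation as the paper's: because $G$ and $H$ share coclass $r$, the subgroup $N$ must be the last non-trivial term of the lower central series of $H$, hence characteristic, central, and fixed pointwise by every automorphism since $|N|=2$. Where you diverge is in the finishing step. The paper argues element by element: for $\alpha \in \mathrm{Aut}(H)$, the induced automorphism of $H/N$ has order $2^k$, so $x^{\alpha^{2^k}} = xz$ with $z \in N$ fixed by $\alpha$, whence $\alpha^{2^{k+1}} = 1$; thus every automorphism has $2$-power order and $\mathrm{Aut}(H)$ is a $2$-group by Cauchy's theorem. You instead run the standard stability-group argument: the kernel of $\mathrm{Aut}(H)\to\mathrm{Aut}(H/N)$ embeds in $\mathrm{Hom}(H/\Phi(H),N)$, an elementary abelian $2$-group, which yields the stronger quantitative statement that $|\mathrm{Aut}(H)|$ divides $|\mathrm{Hom}(H/\Phi(H),N)|\cdot|\mathrm{Aut}(G)|$. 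Both finishes are sound; the paper's is slightly more elementary (no need to check that $x\mapsto \alpha(x)x^{-1}$ is a homomorphism or that the resulting map on the kernel is an injective homomorphism), while yours gives an explicit bound on the $2$-part added when passing from $G$ to $H$. One small wording point in your characteristicity argument: you assert $N \le \gamma_c(H)\cap Z(H)$ and then that the coclass condition forces $|\gamma_c(H)|=2$; the cleaner derivation runs the other way: since $H/N$ has order $2^{n-1}$ and coclass $r$, its class is $c-1$, so $\gamma_c(H) \le N$, and $\gamma_c(H)\ne 1$ then forces $N=\gamma_c(H)$. This is precisely the paper's remark that $N$ is the last non-trivial term of the lower central series of $H$, so the gap is only one of phrasing, not of substance.
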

\begin{proof}
By the definition of graph $\mathcal{G}(2,r)$,   $G\cong H/N$, where  $N$  is a normal subgroup of order $2$ of $H$. Since $G$ and $H$ have the same coclass $r$,   $N$ is the last non-trivial term of the lower central series of $H$. Thus  $N^\alpha=N$ for all automorphisms $\alpha$ of $H$ and so $\alpha$ acts trivially on $N$ as $|N|=2$. By hypothesis the order of the induced automorphism $ \bar{\alpha}$ on $H/N$ by $\alpha$  is $2^k$ for some non-negative integer $k$. Now for every $x\in H$ there exists some $z\in N$  such that $x^{\alpha^{2^k}}= xz$. Hence  $x^{\alpha^{2^{k+1}}}= x$, since $z^\alpha=z$. So the order of $\alpha$ is a power of $2$. This completes the proof.
\end{proof}

In fact, by  Lemma \ref{l}, if there exists a group in a main line  whose automorphism group is $2$-group, then all its descendants have the same property. In \cite{cl} all infinite pro-$2$-groups of coclass at most 3
have been determined. Since every infinite pro-$2$-group corresponds to one main line, we have  the number of main lines.
\begin{enumerate}
\item
There is 1 main line in $\mathcal{G}(2,1)$;
\item
There are 5 main lines in $\mathcal{G}(2,2)$;
\item
There are 54 main lines in $\mathcal{G}(2,3)$.
\end{enumerate}
The $2$-groups of coclass $1$ were classified \cite{B} and the graph $\mathcal{G}(2,1)$ was known. The dihedral group  $\mathbb{D}_8$ lies in the main line  of $\mathcal{G}(2,1)$ and $Aut(\mathbb{D}_8)\cong \mathbb{D}_8$. Hence  Lemma \ref{l} simply implies that   the automorphism groups of all its descendants (dihedral $2$-group $\mathbb{D}_{2^n}(n \geq 4)$, the generalized quaternion group $\mathbb{Q}_{2^n} (n\geq 4)$ and semi dihedral group $\mathbb{SD}_{2^n}(n \geq 4)$) are $2$-groups.

In the following, we determine all $2$-groups of coclass $2$ whose automorphism groups are $2$-groups.  Note that by $\sf{SmallGroup}(n, m)$, we mean the $m$th group of order $n$ in the small groups library of GAP \cite{G}.
\begin{proof}[Proof of Theorem \ref{c2}]
By using GAP \cite{G}, we find out that the automorphism groups of all $2$-groups of coclass $2$ and order $2^5$ are $2$-groups. It follows from the classification of $2$-groups of coclass $2$ \cite{cl} that every main line in the graph $\mathcal{G}(2,2)$ has a group of order $2^5$. Thus  Lemma \ref{l} implies  that
 all $2$-groups  of coclass $2$ and  order at least $2^5$ have  $2$-power order automorphism groups. Now by GAP \cite{G} it is easy to check that    $2$-groups   $\sf{SmallGroup}(8, 5), \sf{SmallGroup}(16, 12)$ and $\sf{SmallGroup}(16, 13)$ are the only $2$-groups of coclass $2$ whose automorphism groups are not $2$-groups. This completes the proof.
\end{proof}
        In \cite{cl}, $2$-groups of coclass $3$ were classified into $54$ families (or coclass trees). Also a conjectured descendant pattern was presented for every family.  In Theorems \ref{5} and \ref{s},  we find  that one of the coclass tree  contains six infinite sequences of $2$-groups whose automorphism groups are not $2$-groups. In the other coclass trees, we show that the number of $2$-groups of coclass $3$ whose automorphism groups are not $2$-groups is finite.

\begin{proof}[Proof of Theorem \ref{5}]
By  GAP \cite{G}, we have checked the automorphism group orders of $2$-groups of coclass $3$. Among $2$-groups of order $2^8$, we find that there are six groups whose automorphism groups are not $2$-groups. By an application of ANUPQ Package \cite{an} we see that they  are immediate descendants of the group  $\sf{SmallGroup(2^7, 2140)}$. One of them is $\sf{SmallGroup(2^8, 26963)}$ which is capable and having $10$ immediate descendants. The five  others have no immediate descendant. The pattern (see Figure \ref{fig1}), one capable group with $10$ immediate descendants, is repeated  and it is similar to  the descendant patterns  of  Family $55$ and  Family $81$ of \cite{cl}.
 By using the ANUPQ Package \cite{an}, we computed the quotients of order $2^7$ and  $2^8$ of  two latter families and we see that  $\sf{SmallGroup(2^7, 2140)}$ and  $\sf{SmallGroup(2^8, 26963)}$ are  $2$-quotients of  family $81$.
 
According to \cite[Theorem 4.4]{cl},
we must consider  $2$-groups of order at least $2^{15}$ to find at least one group in every main line whose automorphism group is $2$-group. We use GAP \cite{G} and  ANUPQ Package \cite{an} to access these groups.
 By these tools we see that there are only six $2$-groups of  order $2^{15}$ whose automorphism groups are not $2$-groups and these are descendants of capable group $\sf{SmallGroup(2^8, 26963)}$.
 Thus we find that except  one coclass tree (Family $81$ in \cite{cl}), in every main line of the other $53$ coclass trees, there is a $2$-group of order $2^{15}$ whose automorphism group is a $2$-group. So it follows from Lemma \ref{l} that among all $53$ coclass trees there are finitely many  groups  whose automorphism groups  are not $2$-groups.
\end{proof}
\begin{figure}[H]
	\centering
	\captionsetup{justification=centering}
	\includegraphics[width=0.3\textwidth]{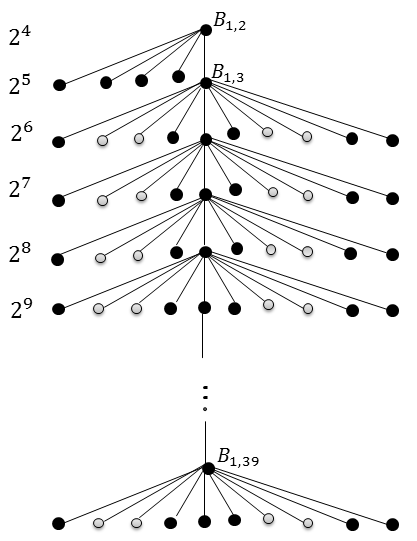}
	\caption{\label{fig1}\\$\circ :$ The groups whose automorphism groups are $2$-groups \\
		\quad\quad$\bullet:$ The groups whose automorphism groups are not $2$-groups}
\end{figure}
\begin{proof}[Proof of Theorem \ref{s}]
 It follow from Theorem \ref{5} that  there are six $2$-groups of  order $2^{15}$ whose automorphism groups are not $2$-groups and these groups belong to  Family $81$ in \cite{cl}.\\
 Let $S$ be the infinite pro-2-group of coclass 3 corresponding to  Family 81. The pro-$2$-group $S$ has dimension $d=1$ and it has the following pro-$2$-presentation (See \cite[Table 2 and Appendix A]{cl}),
 \begin{align*}
\{ t,a,u,v \quad\mid\quad &a^2=1,\quad  u^2=1, \quad v^2=1,\\
&u^{-1}vu=v,\quad a^{-1}ua=u,\quad a^{-1}va=v, \\
 &a^{-1}ta=t^{-1},\quad t^{-1}ut=u, \quad t^{-1}vt=v\}.
\end{align*}
 We first investigate descendant pattern of  $\mathcal{T}_k(S)$ for $k=1$ to find its periodic part. Then we use the periodicity to determine presentations of the infinite sequences of groups in $\mathcal{T}_1(S)$.\\
Since $\gamma_2(S)$ is torsion-free abelian and $S_2$ is a finite $2$-group of coclass $3$, so we have the  primary root, $u=2$.
By Theorem \ref{e}, if we know about the parameter $f$ then we can  determine the periodic part of $\mathcal{T}_1(S)$. In \cite[section 7]{el},  a possible upper bound of $f$ was introduced by means of some parameters $l, a, b, e, d$. We use the inequality  $l\geq u-1+ max\{p^r, 3/2(k+d)\}$ to get $l=9$. Let $R:=S_{9}$ and $T:=\gamma_{9}(S)$. By computing other parameters we find $2$-groups in $\mathcal{B}_{1,39}$ laying in the periodic part. Using ANUPQ Package one can see a descendant pattern in the periodic  part as in Figure \ref{fig1}. It is similar to the conjectured descendant pattern \cite{cl}( one capable group with $10$ immediate descendants).  As   this pattern begins from $2$-groups of order $2^5$,  the subtree $\mathcal{T}_1(S)$  has periodic root $2^5$.\\
  Now, we determine  presentations of the six infinite sequences.
According to \cite{bd}, we should determine some parameters: primary root($u$), secondary root($l$) and offset($o$).\\
As we have computed,   the primary root  $u$ is  $2$ and the secondary root $l=9$. By the structure of $\mathcal{T}_1(S)$,  we can take $l=3$ as the  secondary root.
 So we assume $R=S_3$ and $T=\gamma_3(S)$. Then the kernel $H$ of the action of $R$ on $T$ is $H\cong  C_4\times C_2\times C_2$. By   \cite[Definition 6 and Theorem 7]{bd}, we can choose $o=8$ as the offset.
    It follows that $A_i:=T/T_{o+id}=T/T_{8+i}$ $(i\geq 0)$ is cyclic of order $2^{8+i}$, where $T_j:=\gamma_{l+j}(S)$.
Since the groups in an infinite sequence $\mathcal{F}:=( G_t \mid t\in \mathbb{N}_0)$ are defined as extension of $A_i$ by $R$,  we need to compute cocycle $\delta_i\in H^2(R,A_i)$.
Using the tools in \cite{Ha}, we find that the every extension which is defined by
an element  in $Z^2(R,A_i)$  has a  presentation as follows on the generators $g_1, g_2, g_3, g_4, g_5,t$ with relations
\begin{align*}
  &g_1^2=t^{x_1},  \\
&g_2^{g_1}=g_2g_3t^{x_2},\quad g_2^2=g_3t^{x_3},\\
 &g_3^{g_1}=g_3t^{x_4}, \quad\quad g_3^{g_2}=g_3t^{x_5}, \quad \quad g_3^2=t^{x_6}, \\
 &g_4^{g_1}=g_4t^{x_7},\quad \quad g_4^{g_2}=g_4t^{x_8}, \quad\quad g_4^{g_3}=g_4t^{x_9}, \quad\quad g_4^2=1t^{x_{10}}, \\
 &g_5^{g_1}=g_5t^{x_{11}}, \quad\quad g_5^{g_2}=g_5t^{x_{12}}, \quad g_5^{g_3}=g_5t^{x_{13}}, \quad\quad g_5^{g_4}=g_5t^{x_{14}}, \quad g_5^2=t^{x_{15}},  \\
 &t^{g_1}=t^{-1}, \quad\quad\quad t^{g_2}=t, \quad\quad\quad t^{g_3}=t, \quad\quad\quad\quad t^{g_4}=t, \quad\quad\quad\quad t^{g_5}=t, \quad\quad t^{2^{8+i}}=1
\end{align*}
where $x_1,x_2,\dots,x_{15}\in \{0,\dots,2^{8+i}-1\}$. So every element in $Z^2(R,A_i)$ corresponds to a vector $(x_1,x_2,\dots,x_{15})$ which defines a consistent presentation. A straightforward computation to test the consistency  yields $10$ infinite sequences. Now, Theorem \ref{e} shows that the automorphism groups  of which  sequences  are not $2$-groups. Therefore, the followings are six infinite sequences of $2$-groups of coclass $3$  whose automorphism groups are not $2$-groups.
 \begin{align*}
K_{n}^1=\langle g_1,g_2,g_3,g_4,g_5,t \quad\mid\quad &g_1^2=1, \\
&g_2^{g_1}=g_2g_3,\quad g_2^2=g_3t,\\
 &g_3^{g_1}=g_3t, \quad g_3^{g_2}=g_3, \quad g_3^2=t^{-1}, \\
 &g_4^{g_1}=g_4, \quad g_4^{g_2}=g_4, \quad g_4^{g_3}=g_4, \quad g_4^2=1, \\
 &g_5^{g_1}=g_5, \quad g_5^{g_2}=g_5, \quad g_5^{g_3}=g_5, \quad g_5^{g_4}=g_5, \quad g_5^2=1,  \\
 &t^{g_1}=t^{-1}, \quad t^{g_2}=t, \quad\quad t^{g_3}=t, \quad\quad t^{g_4}=t, \quad t^{g_5}=t, \quad t^{2^{n+1}}=1\rangle
\end{align*}
\begin{align*}
K_{n}^2=\langle g_1,g_2,g_3,g_4,g_5,t \quad\mid\quad &g_1^2=1,\\
&g_2^{g_1}=g_2g_3, \quad g_2^2=g_3t^{1+2^n},\\
 &g_3^{g_1}=g_3t, \quad g_3^{g_2}=g_3, \quad g_3^2=t^{-1},\\
 &g_4^{g_1}=g_4, \quad g_4^{g_2}=g_4, \quad g_4^{g_3}=g_4, \quad g_4^2=1,\\
 &g_5^{g_1}=g_5, \quad g_5^{g_2}=g_5, \quad g_5^{g_3}=g_5, \quad g_5^{g_4}=g_5, \quad g_5^2=1, \\
 &t^{g_1}=t^{-1}, \quad t^{g_2}=t, \quad\quad t^{g_3}=t, \quad\quad t^{g_4}=t, \quad t^{g_5}=t, \quad t^{2^{n+1}}=1\rangle
\end{align*}
\begin{align*}
K_{n}^3=\langle g_1,g_2,g_3,g_4,g_5,t \quad\mid\quad &g_1^2=t^{2^n},\\
&g_2^{g_1}=g_2g_3, \quad g_2^2=g_3t,\\
&g_3^{g_1}=g_3t, \quad g_3^{g_2}=g_3, \quad g_3^2=t^{-1},\\
 &g_4^{g_1}=g_4, \quad g_4^{g_2}=g_4, \quad g_4^{g_3}=g_4, \quad g_4^2=1,\\
 &g_5^{g_1}=g_5, \quad g_5^{g_2}=g_5,\quad g_5^{g_3}=g_5,\quad g_5^{g_4}=g_5, \quad g_5^2=1, \\
  &t^{g_1}=t^{-1}, \quad t^{g_2}=t,\quad t^{g_3}=t,\quad\quad t^{g_4}=t,\quad\quad t^{g_5}=t, \quad t^{2^{n+1}}=1\rangle
\end{align*}
\begin{align*}
K_{n}^4=\langle g_1,g_2,g_3,g_4,g_5,t \quad\mid\quad &g_1^2=1,\\
 &g_2^{g_1}=g_2g_3, \quad g_2^2=g_3t,\\
 &g_3^{g_1}=g_3t, \quad g_3^{g_2}=g_3, \quad g_3^2=t^{-1},\\
 &g_4^{g_1}=g_4, \quad g_4^{g_2}=g_4, \quad g_4^{g_3}=g_4, \quad g_4^2=1,\\
 &g_5^{g_1}=g_5, \quad g_5^{g_2}=g_5,\quad g_5^{g_3}=g_5, \quad g_5^{g_4}=g_5t^{2^n},\quad g_5^2=1, \\
 &t^{g_1}=t^{-1}, \quad t^{g_2}=t, \quad t^{g_3}=t, \quad t^{g_4}=t, \quad t^{g_5}=t, \quad t^{2^{n+1}}=1\rangle
\end{align*}
\begin{align*}
K_{n}^5=\langle g_1,g_2,g_3,g_4,g_5,t \quad\mid\quad &g_1^2=1,\\
&g_2^{g_1}=g_2g_3, \quad g_2^2=g_3t^{1+2^n},\\
&g_3^{g_1}=g_3t, \quad g_3^{g_2}=g_3,  \quad g_3^2=t^{-1},\\
&g_4^{g_1}=g_4,\quad g_4^{g_2}=g_4, \quad g_4^{g_3}=g_4, \quad g_4^2=1,\\
 &g_5^{g_1}=g_5, \quad g_5^{g_2}=g_5,\quad g_5^{g_3}=g_5,\quad g_5^{g_4}=g_5t^{2^n},\quad g_5^2=1, \\
  &t^{g_1}=t^{-1}, \quad t^{g_2}=t, \quad t^{g_3}=t,\quad t^{g_4}=t, \quad t^{g_5}=t, \quad t^{2^{n+1}}=1\rangle
\end{align*}
\begin{align*}
K_{n}^6=\langle g_1,g_2,g_3,g_4,g_5,t \quad\mid\quad &g_1^2=t^{2^n},\\
&g_2^{g_1}=g_2g_3, \quad g_2^2=g_3t,\\
&g_3^{g_1}=g_3t, \quad g_3^{g_2}=g_3, \quad g_3^2=t^{-1},\\
&g_4^{g_1}=g_4, \quad g_4^{g_2}=g_4, \quad g_4^{g_3}=g_4, \quad g_4^2=1,\\
 &g_5^{g_1}=g_5,\quad g_5^{g_2}=g_5, \quad g_5^{g_3}=g_5, \quad g_5^{g_4}=g_5t^{2^n}, \quad g_5^2=1, \\ &t^{g_1}=t^{-1}, \quad t^{g_2}=t, \quad t^{g_3}=t, \quad t^{g_4}=t, \quad t^{g_5}=t, \quad t^{2^{n+1}}=1\rangle
 \end{align*}

	Consider $\mathcal{T}_2(S)$.  As the proof of Theorem \ref{s}, we find that  the parameter $l=9$, when $k=2$.  So the other parameters $a, b, e,d$ are the same as those  of $\mathcal{T}_1(S)$ and $39$-th branch of $\mathcal{T}_2(S)$ lies in  the periodic part. By using ANUPQ Package, we find that there is no group in $\mathcal{B}_{2,39}$ and  $\mathcal{T}_2(S)$  with distance $2$  from the main line.   Therefore in the coclass tree $\mathcal{T}(S)$, the maximum distance of a group from a mainline group is $1$ and $\mathcal{T}(S)=\mathcal{T}_1(S)$. So according to Theorem \ref{5} and Theorem \ref{s} we have the only $2$-groups of coclass $3$  whose automorphism groups are not $2$-groups are $\sf{SmallGroup}(16, i)$, $\sf{SmallGroup}(32, j)$, $\sf{SmallGroup}(64, k)$ and $\sf{SmallGroup}(128, l)$ for $\sf{i}\in\{2, 10, 14\}$, $\sf{j}\in\{2, 26, 27, 33, 34, 38, 46, 47, 48, 49, 50\}$, $\sf{k}\in\{138$, $139, 251, $$ 258\}$ and $\sf{l}\in\{934,937\}$, and the six infinite sequences of $2$-groups presented in the proof of Theorem \ref{s}.
\end{proof}
\section*{Acknowledgements}
   The first author was supported in
part by Grant No. 96050219 from School of Mathematics, Institute for Research in Fundamental Sciences (IPM). The first
author was additionally financially supported by the Center of Excellence for Mathematics at the University of Isfahan.

\end{document}